\documentclass{ifacconf}

\usepackage{graphicx}      % include this line if your document contains figures
\usepackage{natbib}        % required for bibliography
%===============================================================================

\usepackage{amsmath,amssymb,amsfonts}

\usepackage[T1]{fontenc}
\usepackage{comment}

%\smartqed
%\usepackage{showkeys}

%\numberwithin{equation}{section}

%\usepackage{showkeys}
%\usepackage[active]{srcltx}
%\usepackage{amsthm}
%\usepackage{mathdots}
%\usepackage{oldgerm}
%\usepackage{cite}

%\usepackage{pifont}
%\usepackage{mdwlist}
%\usepackage{mathrsfs}
%\usepackage{comment}
%\usepackage{setspace}
%\usepackage{mathdots}
\usepackage{tikz}
\usepackage{pgfplots}
\usepackage{subfig}
\usepackage{caption}
%\captionsetup{font=footnotesize}
%\usepackage{float}
%\restylefloat{figure}
%\usepackage{longtable}
%\usepackage{verbatim}
%\usepackage{relsize}
%\usepackage{exscale}
%\usepackage[title]{appendix}
\usepackage{siunitx}

\newcommand{\cN}{\mathcal{N}}

\newcommand{\cG}{\mathcal{G}}

\newcommand{\cD}{\mathcal{D}}

\newcommand{\cF}{\mathcal{F}}

%\makeindex             % used for the subject index
                       % please use the style svind.ist with
                       % your makeindex program

%\oddsidemargin-0.5cm \evensidemargin0cm \topmargin-1.0cm \textwidth17.1cm \textheight23cm
%\setlength{\parindent}{0mm}

%\def \theequation {\arabic{section}.\arabic{equation}}
\usetikzlibrary{arrows}
\usetikzlibrary{fit}\usetikzlibrary{calc}
  \pgfdeclarelayer{background}
  \pgfsetlayers{background,main}

\newtheorem{Prop}{Proposition}[section]
\newtheorem{Lem}[Prop]{Lemma}
\newtheorem{Thm}[Prop]{Theorem}

\newtheorem{Def}[Prop]{Definition}
\newtheorem{Rem}[Prop]{Remark}

\newcommand{\RN}[1]{\uppercase\expandafter{\romannumeral#1}}
\newcommand{\eps}{\varepsilon}

\newcommand{\N}{{\mathbb{N}}}

\newcommand{\R}{{\mathbb{R}}}

\newcommand{\cTT}{{\mathbb T}^{n,q}_h}
\newcommand{\cCC}{C([-h,\infty),\R^n)}
\newcommand{\cLL}{L_{\rm loc}^\infty(\R_{\ge 0},\R^q)}

\DeclareMathOperator{\esssup}{ess\, sup}

\newcommand{\setdef}[2]{\left\{\, #1 \left|\, \vphantom{#1} #2\right.\right\}}
\newcommand{\ddt}{\tfrac{\text{\normalfont d}}{\text{\normalfont d}t}}

\DeclareMathOperator{\sgn}{sgn}

\DeclareMathOperator{\sat}{sat}

%          environment name
{\left(\begin{smallmatrix}}%            begin code
{\end{smallmatrix}\right)}%             end code

%          environment name
{\left[\begin{smallmatrix}}%            begin code
{\end{smallmatrix}\right]}%             end code

 %changes the appearence in the enumerate environment
 %changes the appearence at \ref-commands

\sloppy

\begin{document}
\begin{frontmatter}

\title{An improved input-constrained funnel controller for nonlinear systems\thanksref{footnoteinfo}} 
% Title, preferably not more than 10 words.

\thanks[footnoteinfo]{Funded by the Deutsche Forschungsgemeinschaft (DFG, German
Research Foundation) -- Project-ID 544702565.}

\author[First]{Thomas Berger} 

\address[First]{Martin-Luther-Universit\"at Halle-Wittenberg, Institut f\"ur Mathematik, Theodor-Lieser-Str.~5, 06120~Halle (Saale), Germany (e-mail: thomas.berger@mathematik.uni-halle.de).}

\begin{abstract}                % Abstract of 50--100 words
We present an improvement of a recent funnel controller design for uncertain nonlinear multi-input, multi-output systems modeled by higher order functional differential equations in the presence of input constraints. The objective is to guarantee the evolution of the tracking error within a performance funnel with prescribed desired shape for the case of inactive saturation. Compared to its precursor, controller complexity is significantly reduced, much fewer design parameters are involved and simulations exhibit a superior performance.
\end{abstract}

\begin{keyword}
control of constrained systems,
adaptive control,
systems with saturation,
functional differential equations,
nonlinear systems.
\end{keyword}

\end{frontmatter}

%%%%%%%%%%%%%%%%%%%%%%%%%%%%%%%%%%%%%%%%%%%%%%%%%%%%%%%%%%%%%%%%%%%%%%%%%%%%%%%%%%%%%%%%%%%%%%%%%%%%%%%%%%%%%
\section{Introduction}\label{Sec:Intr}
%%%%%%%%%%%%%%%%%%%%%%%%%%%%%%%%%%%%%%%%%%%%%%%%%%%%%%%%%%%%%%%%%%%%%%%%%%%%%%%%%%%%%%%%%%%%%%%%%%%%%%%%%%%%%
%

The purpose of this paper is to present an improvement of a controller design recently published by the author in~\cite{Berg24}. For an extensive literature survey on the topic we refer to the aforementioned work and do not repeat it here. In the following we briefly recall the problem statement.\\
We study funnel control for the class of nonlinear multi-input multi-output systems described by the $r$-th order functional differential equation
\begin{equation}\label{eq:Sys}
    \begin{aligned}
      y^{(r)}(t) &= f\big(d(t), T(y,\dot y,\ldots,y^{(r-1)})(t),u(t)\big),\\
      y|_{[-h,0]} &= y^0 \in C^{r-1}([-h,0],\R^m),
    \end{aligned}
\end{equation}
with continuous function $f$ and operator~$T$ (with properties to be specified later) as well as bounded disturbance~$d$ and  initial trajectory~$y^0$~-- all of these parameters are unknown and not available for controller design. Furthermore, the system is subject to input constraints
\begin{equation}\label{eq:IC}
u(t) = \sat(v(t))
\end{equation}
with \textit{known} saturation function $\sat$ and control function~$v$ provided by the to-be-designed controller.\\
The control objective is to ideally achieve a prescribed performance of the tracking error, that is $\|y(t) - y_{\rm ref}(t)\| < \psi(t)$ for some given reference signal $y_{\rm ref}$ and funnel function~$\psi$. However,  since we consider the input constraints to be \textit{hard constraints}, a conflict of objectives arises: As explained in~\cite{Berg24}, it is not possible to simultaneously satisfy the input and output constraints for any given bounded reference signal. Instead, we consider the aforementioned output constraints to be \textit{soft constraints}, i.e., they can be relaxed when needed in order to meet the input constraints. To achieve this, in~\cite{Berg24} a mechanism to dynamically adjust the funnel function~$\psi$ has been presented. The idea is to widen the performance funnel described by~$\psi$ whenever the input saturation is active, so that no constraints are violated. When the saturation becomes inactive the function~$\psi$  reverts to its prescribed shape
exponentially fast. Recently, in a series of papers~\cite{TrakBech22,TrakBech23,TrakBech24}, Trakas and Bechlioulis have developed a similar approach, however for different system classes. In particular, the most recent result in~\cite{TrakBech24} requires the system to be input-to-state stable, which however is often not satisfied in practical applications, see e.g.\ the simple example in Section~\ref{Sec:Sim}. While it is further shown in~\cite{TrakBech24} that the input-to-state stability assumption can be relaxed to a bounded-input-bounded-state assumption on the internal dynamics (commonly required in funnel control, see e.g.~\cite{BergIlch21}), boundedness of closed-loop signals can then only be guaranteed for those evolving in a certain (unknown) compact set. In the present work we focus on a different system class and do not require either of both assumptions.\\
While in~\cite{Berg24} a chain of~$r$ interconnected funnel functions was used, the improved control design that we present here uses only one funnel function. Therefore, the number of dynamic equations involved in the controller design (and hence its complexity) is significantly reduced. Furthermore, in contrast to~\cite{Berg24}, much fewer controller design parameters are comprised and we are able to prove that the control function~$v$ is always bounded for bounded reference signals.

%%%%%%%%%%%%%%%%%%%%%%%%%%%%%%%%%%%%%%%%%%%%%%%%%%%%%%%%%%%%%%%%%%%%%%%%%%%%%%%%%%%%%%%%%%%%%%%%%%%%%%%%%%%%%
\textit{Nomenclature.}\ 
%%%%%%%%%%%%%%%%%%%%%%%%%%%%%%%%%%%%%%%%%%%%%%%%%%%%%%%%%%%%%%%%%%%%%%%%%%%%%%%%%%%%%%%%%%%%%%%%%%%%%%%%%%%%%
In the following let $\N$ denote the natural numbers, $\N_0 = \N \cup\{0\}$, and $\R_{\ge 0} =[0,\infty)$. By $\|x\|$ we denote the Euclidean norm of $x\in\R^n$. For some interval $I\subseteq\R$, some $V\subseteq\R^m$ and $k\in\N$, $L^\infty(I, \R^{n})$ $\big(L^\infty_{\rm loc} (I, \R^{n})\big)$ is the Lebesgue space of measurable, (locally) essentially bounded {functions} $f\colon I\to\R^n$, $W^{k,\infty}(I,  \R^{n})$ is the Sobolev space of all functions
$f:I\to\R^n$ with $k$-th order weak derivative $f^{(k)}$ and $f,f^{(1)},\ldots,f^{(k)}\in L^\infty(I, \R^{n})$, and
 $C^k(V,  \R^{n})$ is the set of  $k$-times continuously differentiable functions  $f:  V  \to \R^{n}$, with $C(V,  \R^{n}) := C^0(V,  \R^{n})$.

%%%%%%%%%%%%%%%%%%%%%%%%%%%%%%%%%%%%%%%%%%%%%%%%%%%%%%%%%%%%%%%%%%%%%%%%%%%%%%%%%%%%%%%%%%%%%%%%%%%%%%%%%%%%%
\textit{System class.}\
%%%%%%%%%%%%%%%%%%%%%%%%%%%%%%%%%%%%%%%%%%%%%%%%%%%%%%%%%%%%%%%%%%%%%%%%%%%%%%%%%%%%%%%%%%%%%%%%%%%%%%%%%%%%%
We recall the necessary definitions from~\cite{Berg24}.

\begin{Def}
For $n,q\in\N$ and $h \geq 0$ the set $\cTT$ denotes the class of operators $T\colon \cCC\to \cLL$ with the following properties.
\begin{enumerate}%[\hspace{2pt}\textbf{(P1)}]
\item[\textbf{(P1)}]
 $T$ is causal, i.e., for all $\zeta$, $\xi \in \cCC$ and all $t\ge 0$,
\[
\zeta|_{[-h,t]} =\xi|_{[-h,t]} ~~\implies~~ T(\zeta)|_{[0,t]}=T(\xi)|_{[0,t]}.
\]
\item[\textbf{(P2)}]
 $T$ is locally Lipschitz, i.e., for each $t\ge 0$ and all $\xi\in C([-h,t],\R^{n})$, there exist positive constants $c_0, \delta, \tau >0$ such that, for all $\zeta_1,\zeta_2 \in \cCC$ with $\zeta_i|_{[-h,t]} = \xi$
and $\|\zeta_i(s)-\xi(t)\|<\delta$ for all $s\in[t,t+\tau]$ and $i=1,2$, we have
\begin{multline*}
 \underset{s\in [t,t+\tau]}{\esssup}\  \|T(\zeta_1 )(s)-T(\zeta_2) (s)\| \\
 \leq c_0 \sup_{s\in [t,t+\tau]}\|\zeta_1(s)-\zeta_2(s)\|.
\end{multline*}
\item[\textbf{(P3)}]
 $T$ locally maps bounded functions to bounded functions, i.e., for all $\tau>0$ and all $c_1 >0$, there exists $c_2 >0$ such that, for all $\zeta\in C([-h,\tau],\R^n)$,
\[
\sup_{t\in[-h,\tau]}\|\zeta(t)\|\le c_1 ~~\implies~~ \underset{t\in[0,\tau]}{\esssup}\ \|T(\zeta)(t)\| \le c_2.
\]
\end{enumerate}
\end{Def}

Next we recall the sector bound property of~$f$ and~$T$. 

\begin{enumerate}%[\hspace{2pt}\textbf{(P1)}]
\item[\textbf{(P4)}] For all $y^0 \in C^{r-1}([-h,0],\R^m)$ there exist $M_1, \ldots, M_{r+1}\in C(\R_{\ge 0}\times\R^p\times\R^m,\R_{\ge 0})$ which are bounded in~$t$, such that for all $t\ge 0$, all $(d,v)\in\R^p\times\R^m$ and all $\zeta_1,\ldots,\zeta_r\in C([-h,t],\R^m)$ with $\zeta_i|_{[-h,0]} = (y^0)^{(i-1)}$ for $i=1,\ldots,r$ we have:
    \begin{multline*}
   \hspace{-0.8cm} \|f(d,T(\zeta_1,\ldots,\zeta_r)(t),v)\| \le M_1(t,d,v) \\
    \hspace{-0.4cm}+ M_2(t,d,v) \|\zeta_1|_{[-h,t]}\|_\infty + \ldots + M_{r+1}(t,d,v) \|\zeta_r|_{[-h,t]}\|_\infty
    \end{multline*}
\end{enumerate}

Note that the functions $M_i$ in (P4) depend on the initial history $y^0$ in~\eqref{eq:Sys}. Furthermore, compared to~\cite{Berg24} we additionally assume that each~$M_i$ is bounded in~$t$, which is required to show that the control signal~$v$ in~\eqref{eq:IC}, generated by the controller, is bounded.\\
Next we recall the system class from~\cite{Berg24}. We stress that the high-gain property of system~\eqref{eq:Sys} required in earlier approaches, see e.g.~\cite{BergIlch21}, is not needed here; this is also different from~\cite{TrakBech24}.  It is not even required that~$f$ depends on~$u$; however, in this case it is possible that the tracking error grows unbounded.

\begin{Def}\label{Def:SysClass}
For $m,r\in\N$ we say that system~\eqref{eq:Sys} belongs to the system class $\cN^{m,r}$, written $(d,f,T)\in\cN^{m,r}$, if $d\in L^\infty(\R_{\ge 0},\R^p)$, $f\in C(\R^p\times \R^q\times\R^m,\R^m)$, $T\in {\mathbb T}^{rm,q}_h$ for some $p,q\in\N$, $h\geq 0$ and $(f,T)$ satisfy property~(P4).
\end{Def}

%As shown in~\cite[Rem.~1.3]{Berg24} the class $\cN^{m,r}$ contains a large class of systems in well-known state-space form. 
For the saturation function we require the following.

\begin{enumerate}%[\hspace{2pt}\textbf{(P1)}]
\item[\textbf{(P5)}] $\sat\!\in\! C(\R^m,\R^m)$ is bounded and there exists $\theta>0$ such that for all $v\in\R^m$ with $\|v\|\le \theta$ we have $\sat(v) = v$.
\end{enumerate}

We stress that the input saturation function $\sat$ must be known to the controller and it can be viewed as a design parameter, chosen according to the specific requirements of the application at hand. The above property~(P5) allows for a large variety of possible saturations, apart from the standard saturation $\sat_i(v) = v_i$ for $|v_i|\le M$ and $\sat_i(v) = \sgn(v_i) M$ for $|v_i|>M$ for all $i=1,\ldots,m$.

%%%%%%%%%%%%%%%%%%%%%%%%%%%%%%%%%%%%%%%%%%%%%%%%%%%%%%%%%%%%%%%%%%%%%%%%%%%%%%%%%%%%%%%%%%%%%%%%%%%%%%%%%%%%%
\textit{Control objective.}\
%%%%%%%%%%%%%%%%%%%%%%%%%%%%%%%%%%%%%%%%%%%%%%%%%%%%%%%%%%%%%%%%%%%%%%%%%%%%%%%%%%%%%%%%%%%%%%%%%%%%%%%%%%%%%
The objective is to design a dynamic output derivative feedback strategy such that, ideally, for any reference signal $y_{\rm ref}\in W^{r,\infty}(\R_{\ge 0},\R^m)$ the tracking error $e = y-y_{\rm ref}$ evolves within a performance funnel
\[
    \cF_\psi := \setdef{(t,e)\in\R_{\ge 0}\times\R^m}{\|e\|<\psi(t)},
\]
see Fig.~\ref{Fig:funnel}, which has a desired shape of the form $\psi_{\rm des}(t) = a e^{-bt} +c$ whenever the saturation in~\eqref{eq:IC} is not active, i.e., $\sat(v(t)) = v(t)$, and the actual funnel boundary $\psi(t)$ is allowed to deviate from this shape and become larger when the saturation is active. The specific value of~$\psi(t)$ should be determined by a dynamic part of the control law.

 \begin{figure}[h]
  \begin{center}
\begin{tikzpicture}[scale=0.35]
\tikzset{>=latex}
  %\draw[thick,->] (0,-6)--(0,6);
  \filldraw[color=gray!25] plot[smooth] coordinates {(0.15,4.7)(0.7,3.3)(4,0.6)(6,1.5)(9.5,0.4)(10,0.333)(10.01,0.331)(10.041,0.3) (10.041,-0.3)(10.01,-0.331)(10,-0.333)(9.5,-0.4)(6,-1.5)(4,-0.6)(0.7,-3.3)(0.15,-4.7)};
  \draw[thick] plot[smooth] coordinates {(0.15,4.7)(0.7,3.3)(4,0.6)(6,1.5)(9.5,0.4)(10,0.333)(10.01,0.331)(10.041,0.3)};
  \draw[thick] plot[smooth] coordinates {(10.041,-0.3)(10.01,-0.331)(10,-0.333)(9.5,-0.4)(6,-1.5)(4,-0.6)(0.7,-3.3)(0.15,-4.7)};
  \draw[thick,fill=lightgray] (0,0) ellipse (0.4 and 5);
  \draw[thick] (0,0) ellipse (0.1 and 0.333);
  \draw[thick,fill=gray!25] (10.041,0) ellipse (0.1 and 0.333);
  \draw[thick] plot[smooth] coordinates {(0,2)(2,1.1)(4,-0.1)(6,-0.7)(9,0.25)(10,0.15)};
  \draw[thick,->] (-2,0)--(12,0) node[right,above]{\normalsize$t$};
  \draw[thick,dashed](0,0.333)--(10,0.333);
  \draw[thick,dashed](0,-0.333)--(10,-0.333);
  \node [black] at (0,2) {\textbullet};
  \draw[->,thick](4,-3)node[right]{\normalsize$c$}--(2.5,-0.4);
  \draw[->,thick](3,3)node[right]{\normalsize$(0,e(0))$}--(0.07,2.07);
  \draw[->,thick](9,3)node[right]{\normalsize$\psi(t)$}--(7,1.4);
  \draw [color=blue,thick,smooth,domain=0.05:10] plot(\x,{4.7*exp(-0.9*(\x-0.05))+0.3});
  \draw [color=blue,thick,smooth,domain=0.05:10] plot(\x,{-4.7*exp(-0.9*(\x-0.05))-0.3});
  \draw[->,thick,color=blue](10,-1.8)node[right]{\normalsize$\psi_{\rm des}(t)$}--(8,-0.3);
\end{tikzpicture}
\end{center}
 %\vspace*{-2mm}
 \caption{Error evolution in a funnel $\mathcal F_{\psi}$ with boundary $\psi(t)$ and desired shape $\psi_{\rm des}(t)$.}
 \label{Fig:funnel}
 \end{figure}

In contrast to classical funnel control~\cite{BergIlch21,IlchRyan02b,BergLe18a}, the funnel boundary~$\psi$ is not fully prescribed, but widened when necessary in order to meet the input constraints. Nevertheless, the desired ``asymptotic shape'' $\psi_{\rm des}(t) = a e^{-bt} +c$ under inactive saturation can be prescribed by choice of the parameters $a,b,c$.

%%%%%%%%%%%%%%%%%%%%%%%%%%%%%%%%%%%%%%%%%%%%%%%%%%%%%%%%%%%%%%%%%%%%%%%%%%%%%%%%%%%%%%%%%%%%%%%%%%%%%%%%%%%%%
%\subsection{Organization of the present paper}\label{Ssec:Orga}
%%%%%%%%%%%%%%%%%%%%%%%%%%%%%%%%%%%%%%%%%%%%%%%%%%%%%%%%%%%%%%%%%%%%%%%%%%%%%%%%%%%%%%%%%%%%%%%%%%%%%%%%%%%%%

%The paper is structured as follows. In Section~\ref{Sec:ConStruc}, we introduce an improved version of the funnel controller from~\cite{Berg24} for systems~\eqref{eq:Sys} under input constraints~\eqref{eq:IC}. Feasibility of the control is proved in the main result in Section~\ref{Sec:Main}: existence of a global solution for systems of class~$\cN^{m,r}$ is shown in Theorem~\ref{Thm:FunCon}. The performance of the improved controller is compared to that from~\cite{Berg24} by an illustrative example in Section~\ref{Sec:Sim}. The paper concludes with Section~\ref{Sec:Concl}.

%%%%%%%%%%%%%%%%%%%%%%%%%%%%%%%%%%%%%%%%%%%%%%%%%%%%%%%%%%%%%%%%%%%%%%%%%%%%%%%%%%%%%%%%%%%%%%%%%%%%%%%%%%%%%
\section{Funnel control structure}\label{Sec:ConStruc}
%%%%%%%%%%%%%%%%%%%%%%%%%%%%%%%%%%%%%%%%%%%%%%%%%%%%%%%%%%%%%%%%%%%%%%%%%%%%%%%%%%%%%%%%%%%%%%%%%%%%%%%%%%%%%
%

We introduce the following improved input-constrained funnel controller for systems~\eqref{eq:Sys},~\eqref{eq:IC}.
\begin{equation}\label{eq:ICFC}
\boxed{
\begin{aligned}
    e_1(t) &= e(t) = y(t) - y_{\rm ref}(t),\\
    e_{i+1}(t) &= \dot e_i(t) + k_i e_i(t),\quad i=1,\ldots,r-1,\\
    \dot \psi(t) &= -\alpha \psi(t) + \beta + \psi(t) \frac{\kappa(v(t))}{\|e_r(t)\|},\ \psi(0) = \psi^0,\\
    \kappa(v(t)) &= \|v(t)-\sat(v(t))\|,\\
    k(t) &= \left(1- \frac{\|e_r(t)\|^2}{\psi(t)^2}\right)^{-1},\\
    v(t) &= N\big(k(t)\big) e_r(t)
\end{aligned}
}
\end{equation}
with the controller design parameters
\begin{equation}\label{eq:FC-param}
\boxed{
\begin{aligned}
    &\alpha,\beta>0,\  k_1,\ldots,k_{r-1}>\alpha,\ \psi^0>\tfrac{\beta}{\alpha},\\
    &N\in C(\R_{\ge 0},\R)\ \text{ a surjection}.
\end{aligned}
}
\end{equation}
%Furthermore, with reference to Fig.~\ref{Fig:funnel-controller}, in~\eqref{eq:ICFC} we assume that the instantaneous values of the tracking error $e(t)$ and its derivatives $\dot e(t),\ldots, e^{(r-1)}(t)$ are available for feedback, thus~\eqref{eq:ICFC} is a dynamic error derivative feedback controller.

\captionsetup[subfloat]{labelformat=empty}
\begin{figure*}[h!tb]
\centering
\resizebox{0.75\textwidth}{!}{
%{\footnotesize
%            	\includegraphics[width=0.9\textwidth]{lin-pq.eps}
   \begin{tikzpicture}[very thick,scale=0.7,node distance = 9ex, box/.style={fill=white,rectangle, draw=black}, blackdot/.style={inner sep = 0, minimum size=3pt,shape=circle,fill,draw=black},blackdotsmall/.style={inner sep = 0, minimum size=0.1pt,shape=circle,fill,draw=black},plus/.style={fill=white,circle,inner sep = 0,very thick,draw},metabox/.style={inner sep = 3ex,rectangle,draw,dotted,fill=gray!20!white}]
 \begin{scope}[scale=0.5]
    \node (sys) [box,minimum size=7ex]  {$y^{(r)}(t)= f\big(d(t), T(y,\dot{y},\dots,y^{(r-1)})(t), u(t)\big)$};
    \node [minimum size=0pt, inner sep = 0pt,  below of = sys, yshift=3ex] {System $(d,f,T)\in  \cN^{m,r}$};
    \node(fork1) [minimum size=0pt, inner sep = 0pt,  right of = sys, xshift=32ex] {};
    \node(end1)  [minimum size=0pt, inner sep = 0pt,  right of = fork1, xshift=5ex] {$\big(y,\ldots,y^{(r-1)}\big)$};

   \draw[->] (sys) -- (end1) node[pos=0.4,above] {};

  \node(FC0) [box, below of = fork1,yshift=-5ex,minimum size=7ex] {{$\begin{aligned}
    e_1(t) &= e(t) = y(t) - y_{\rm ref}(t)\\
    e_{i+1}(t) &= e^{(i)}(t) + k_i e_i(t)\end{aligned}$}};
    \node(FC1) [box, below of = sys,yshift=-5ex,minimum size=7ex] {$\dot \psi(t) = -\alpha \psi(t) + \beta + \psi(t) \tfrac{\kappa(v(t))}{\|e_r(t)\|}$};
    %{$\begin{aligned} e(t) &= y(t)- y_{\rm ref}(t)\\ w(t)  &=
%\rho_r \Big(\varphi(t) \big( e(t),\ldots, e^{(\hat r-1)}(t),\\  &\quad\ y^{(\hat r)}(t),\ldots, y^{(r-1)}(t)\big)\Big) \end{aligned}$};
    \draw[->] (fork1) -- (FC0) {};
   % \node (yref) [box, below of = FC1,yshift=-10ex,minimum size=4ex] {$W^{r,\infty}(\R_{\ge 0},\R^m)$};
   % \node (yref)[minimum size=0pt, inner sep = 0pt,  below of = FC1, yshift=-5ex]{};
    \node(FC2) [box, below of = FC1,yshift=-4ex,minimum size=7ex] {$v(t) =N\big(k(t)\big) e_r(t)$};
     \node(sat) [box, left of = FC1,xshift=-22ex,minimum size=7ex] {$u(t) = \sat(v(t))$};
   %\node(alpN) [minimum size=0pt, inner sep = 0pt,  below of = FC2, yshift=-5ex] {};
   %\draw[->] (FC2) -- (FC1) node[midway,right] {$v$};
   \node(e0) [minimum size=0pt, inner sep = 0pt,  below of = FC0, xshift=-7ex, yshift=5ex] {};
   \node(e1) [minimum size=0pt, inner sep = 0pt,  below of = FC0, xshift=7ex, yshift=5ex] {};
   \draw[->] (e0) |- (FC2) node[pos=0.7,above] {$e_r$};
   \node(phi) [minimum size=0pt, inner sep = 0pt,  below of = e1, yshift=1ex] {$\big(y_{\rm ref},\ldots,y_{\rm ref}^{(r-1)}\big)$};
   \draw[->] (phi) -- (e1) node[midway,right] {};
   %\draw(alpN) -- (DP);
   %\draw[->](DP.west) -- (FC2.south);
   %\draw(yref) -- (DP);
   %\draw[->] (DP) -| (FC2) node[pos=0.8,right] {$\alpha$, $N$};
   %\draw[->] (DP) -| (FC1) node[pos=0.8,right] {$\varphi$};
   %\node(fork2) [minimum size=0pt, inner sep = 0pt,  left of = sys, xshift=-20ex] {};
   \draw[->] (FC2) -| (sat)  node[pos=0.7,right] {$v$};
   \draw[->] (sat) |- (sys) node[pos=0.7,above] {$u$};
   \draw[<-] (FC1) -- (FC0) node[midway,below] {$e_r$};

   \node(f0) [minimum size=0pt, inner sep = 0pt,  below of = FC1, xshift=-2ex, yshift=5.6ex] {};
   \node(f1) [minimum size=0pt, inner sep = 0pt,  below of = FC1, xshift=2ex, yshift=5.6ex] {};
   \node(f2) [minimum size=0pt, inner sep = 0pt,  below of = f0, xshift=0ex, yshift=3ex] {};
   \node(f3) [minimum size=0pt, inner sep = 0pt,  below of = f1, xshift=0ex, yshift=3ex] {};
   \draw[->] (f2) -| (f0)  node[pos=0.75,left] {$v$};
   \draw[->] (f1) -| (f3)  node[pos=0.75,right] {$\psi$};

   \node [minimum size=0pt, inner sep = 0pt,  below of = FC2, yshift=2ex, xshift=-8ex] {Funnel controller~\eqref{eq:ICFC}};
\end{scope}
\begin{pgfonlayer}{background}
      \fill[lightgray!20] (-4.3,-1.8) rectangle (13.1,-7.8);
      \draw[dotted] (-4.3,-1.8) -- (13.1,-1.8) -- (13.1,-7.8) -- (-4.3,-7.8) -- (-4.3,-1.8);
  \end{pgfonlayer}
  \end{tikzpicture}
%}
}
\caption{Construction of the funnel controller~\eqref{eq:ICFC} and its internal feedback loop.}
\label{Fig:funnel-controller}
\end{figure*}

The controller~\eqref{eq:ICFC} is similar to that presented in~\cite{Berg24} with some significant differences: a) the gains $k_1,\dots,k_{r-1}$ are not dynamic, but static, b) instead of a chain of~$r$ dynamically generated funnel functions, only one function is used, and c)~much fewer controller design parameters are involved. Clearly, all three improvements a)--c) decrease the complexity of the controller design. On the other hand, it is not directly clear that the simplifications preserve its feasibility and effectiveness~-- this requires a proof which we provide in Theorem~\ref{Thm:FunCon}.\\
%Note that the controller~\eqref{eq:ICFC} exhibits an internal feedback loop: the funnel function~$\psi$ is widened based on the term $\kappa(v)$, which becomes positive when the saturation is active. The gain function~$k$ is then calculated on the basis of~$\psi$ and determines the value of the control signal~$v$. Therefore, a feedback structure arises (depicted in Fig.~\ref{Fig:funnel-controller}), for which we seek to prove existence of global solutions.\\
The surjective function~$N$ in~\eqref{eq:FC-param} accommodates for possibly unknown control directions. A typical choice for~$N$ would be $N(s) = s \sin s$. For more details see also~\cite[Rem.~1.8]{BergIlch21}.
The structure of the controller~\eqref{eq:ICFC} is illustrated in Fig.~\ref{Fig:funnel-controller}. Note that the division by $\|e_r(t)\|$ in the differential equation for $\psi$ in~\eqref{eq:ICFC} does not cause any numerical issues upon controller implementation, as $\kappa(v(t))=0$ whenever $\|e_r(t)\|<\delta$ for some $\delta>0$. Further note that~\eqref{eq:ICFC} requires the instantaneous signals of the tracking error derivatives $\dot e(t),\ldots, e^{(r-1)}(t)$, which are typically estimated in practice and sensitive to noise. This issue can be treated by using a pre-compensator or filter, see e.g.~\cite{Lanz22a,DennScha25}.
%Like its precursor presented in~\cite{Berg24}, the novel control design~\eqref{eq:ICFC} is feasible under arbitrary input constraints~\eqref{eq:IC}. If the saturation is inactive  (i.e., $\kappa(v(t))= 0$), then $\psi(t) = \left(\psi^0-\tfrac{\beta}{\alpha}\right) e^{-\alpha t} + \tfrac{\beta}{\alpha}$; if the saturation is active (i.e., $\kappa(v(t))\neq 0$), then $\kappa(v(t))$ provides a positive contribution to $\dot \psi$ and hence widens the funnel. After a period of active saturation, $\psi$ reverts to its prescribed shape exponentially fast. However, 
One difference to its precursor presented in~\cite{Berg24} is that for~\eqref{eq:ICFC} it is not directly clear in which performance funnel the tracking error~$e$ evolves in, if any. While $\|e_r(t)\|<\psi(t)$ is enforced by the control design, a boundary for $\|e(t)\|$ is not obvious. This is clarified by the following lemma.

\begin{Lem}\label{lem:epsi}
Let $e\in C^{r-1}([0,\omega),\R^m)$, $\omega\in(0,\infty]$, and consider the signals $e_1(t) = e(t)$ and $e_{i+1}(t) = \dot e_i(t) + k_i e_i(t)$, $k_i>0$, for $i=1,\ldots,r-1$. Further let $\psi\in C^1([0,\omega),\R)$ be such that $\psi(t)>0$ and $\dot \psi(t) \ge -\alpha \psi(t)$ for all $t\in[0,\omega)$, where $0\le \alpha<\min_{i=1,\ldots,r-1} k_i$. If $\|e_r(t)\| < \psi(t)$ for all $t\in [0,\omega)$, then we have for $i=1,\ldots,r-1$ and all $t\in [0,\omega)$:
\begin{multline}\label{eq:est-ki}
 \|e_i(t)\|
< \max\Bigg\{ \Bigg(\prod_{j=i}^{r-1} \frac{1}{k_j-\alpha}\Bigg) \\
\max_{j=i,\ldots,r-1} \Bigg(\prod_{p=r-j+i}^{r-1} \frac{1}{k_p-\alpha}\Bigg) \frac{\|e_j(0)\|}{\psi(0)}\Bigg\}\ \psi(t).
\end{multline}
\end{Lem}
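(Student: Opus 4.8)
The plan is to prove \eqref{eq:est-ki} by \emph{backward} induction on $i$, starting from the hypothesis $\|e_r(t)\|<\psi(t)$ on $[0,\omega)$ -- which is the desired estimate for ``$i=r$'' with the constant $c_r:=1$ -- and working down to $i=1$. Everything hinges on two elementary consequences of $\dot\psi(t)\ge-\alpha\psi(t)$ (with $\alpha\ge 0$): the function $t\mapsto e^{\alpha t}\psi(t)$ is nondecreasing, whence $\psi(s)\le e^{\alpha(t-s)}\psi(t)$ for all $0\le s\le t<\omega$, and in particular $e^{-\alpha t}\le\psi(t)/\psi(0)$. I would also record at the outset that $e\in C^{r-1}$ forces $e_i\in C^{r-i}\subseteq C^1$ for $i\le r-1$, so all manipulations below are legitimate.

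For the inductive step, fix $i\in\{1,\dots,r-1\}$ and assume $\|e_{i+1}(t)\|<c_{i+1}\psi(t)$ on $[0,\omega)$. Since $e_{i+1}=\dot e_i+k_ie_i$, the signal $e_i$ solves the linear equation $\dot e_i(t)=-k_ie_i(t)+e_{i+1}(t)$, so variation of constants gives $e_i(t)=e^{-k_it}e_i(0)+\int_0^te^{-k_i(t-s)}e_{i+1}(s)\,\mathrm{d}s$. Taking norms, I would bound the homogeneous part by $e^{-k_it}\|e_i(0)\|=e^{-(k_i-\alpha)t}e^{-\alpha t}\|e_i(0)\|\le e^{-(k_i-\alpha)t}\tfrac{\|e_i(0)\|}{\psi(0)}\psi(t)$, and the convolution by $\int_0^te^{-k_i(t-s)}\|e_{i+1}(s)\|\,\mathrm{d}s<c_{i+1}\psi(t)\int_0^te^{-(k_i-\alpha)(t-s)}\,\mathrm{d}s$, the last integral being $(1-\mu)/(k_i-\alpha)$ with $\mu:=e^{-(k_i-\alpha)t}\in(0,1]$ because $k_i-\alpha>0$. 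Adding the two contributions yields
\[
\|e_i(t)\|<\Big(\mu\,\tfrac{\|e_i(0)\|}{\psi(0)}+(1-\mu)\,\tfrac{c_{i+1}}{k_i-\alpha}\Big)\psi(t),
\]
and the decisive point is that the bracket is a \emph{convex combination} of $\|e_i(0)\|/\psi(0)$ and $c_{i+1}/(k_i-\alpha)$, hence dominated by the larger of the two -- this is exactly what produces a maximum rather than a sum. Thus $\|e_i(t)\|<c_i\psi(t)$ on $[0,\omega)$ with $c_i:=\max\{\|e_i(0)\|/\psi(0),\ c_{i+1}/(k_i-\alpha)\}$, strictness for $t>0$ coming from the strictly positive integral term that carries the strict bound on $\|e_{i+1}\|$.

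It then remains to unwind the recursion $c_r=1$, $c_i=\max\{\|e_i(0)\|/\psi(0),\ c_{i+1}/(k_i-\alpha)\}$, $i=r-1,\dots,1$, into closed form. I would do this by the same downward induction: multiplying the (inductively known) expression for $c_{i+1}$ by $1/(k_i-\alpha)$ merely prepends that factor to every product occurring in it, and the subsequent maximum with $\|e_i(0)\|/\psi(0)$ adjoins the new ``$j=i$'' term, which is an empty product. Matching the result against \eqref{eq:est-ki} then finishes the proof. I expect the analytic core -- variation of constants, the monotonicity of $e^{\alpha t}\psi(t)$, and the convex-combination observation -- to be entirely routine; the main obstacle, and the place where one must be careful, is the purely combinatorial verification that these nested maxima collapse precisely onto the products over the index sets displayed in \eqref{eq:est-ki}.
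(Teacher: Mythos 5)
Your argument is correct, but it reaches the decisive recursion by a genuinely different route than the paper. The paper fixes $i$, sets $\ell_i(t):=\max\bigl\{\|e_i(0)\|/\psi(0),\,\|e_{i+1}(t)\|/((k_i-\alpha)\psi(t))\bigr\}$ and proves $\|e_i(t)\|\le\ell_i(t)\psi(t)$ by contradiction: it computes $\tfrac12\ddt\bigl(\|e_i(t)\|^2/\psi(t)^2\bigr)$, uses $\dot\psi\ge-\alpha\psi$ to show this derivative is nonpositive on any interval where $\|e_i\|/\psi$ has reached $\ell_i$, and concludes with a first-crossing argument. You instead solve $\dot e_i=-k_ie_i+e_{i+1}$ explicitly by variation of constants and absorb the exponentials into $\psi$ via the monotonicity of $t\mapsto e^{\alpha t}\psi(t)$; your convex-combination observation plays exactly the role of the paper's ``derivative nonpositive at a touching point'' step. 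Both roads end at the identical recursion $c_r=1$, $c_i=\max\{\|e_i(0)\|/\psi(0),\,c_{i+1}/(k_i-\alpha)\}$ (the paper's $\ell_i(t)\le\max\{\|e_i(0)\|/\psi(0),\,\ell_{i+1}(t)/(k_i-\alpha)\}$ with $\ell_r\equiv1$), so the two proofs are equivalent in substance; yours is more explicit and avoids the contradiction argument, at the price of invoking the integral representation, whose validity ($e_i\in C^1$ for $i\le r-1$) you correctly justify.

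One concrete remark on the step you deferred. Unwinding $c_i=\max\{A_i,\,c_{i+1}/(k_i-\alpha)\}$ with $A_j:=\|e_j(0)\|/\psi(0)$ produces the coefficient $\prod_{p=i}^{j-1}(k_p-\alpha)^{-1}$ in front of $A_j$, whereas \eqref{eq:est-ki} displays $\prod_{p=r-j+i}^{r-1}(k_p-\alpha)^{-1}$; the two products have the same number of factors, $j-i$, but run over different index sets and coincide only when the relevant $k_p$ are equal. Since the paper's own proof yields the same recursion as yours, this is an indexing slip in the displayed formula rather than a defect of your argument, but you should not expect your closed form to match \eqref{eq:est-ki} verbatim. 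A second, shared, cosmetic point: at $t=0$ your bound (and likewise the paper's) degenerates to equality whenever the maximum is attained at $\|e_i(0)\|/\psi(0)$, so the inequality in \eqref{eq:est-ki} should really be non-strict there; only the non-strict form is used later in Step~3 of Theorem~\ref{Thm:FunCon}.
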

\begin{pf} Define $c_r:=1$ and for $i=r-1,\ldots,1$ the constants  $c_i:= \max\left\{\frac{\|e_i(0)\|}{\psi(0)}, \frac{c_{i+1}}{k_i-\alpha}\right\}$, which are well defined since $k_i>\alpha$. We show that $\|e_i(t)\|\le c_i \psi(t)$ for all $t\in[0,\omega)$ by induction on~$i$. For $i=r$ the statement is clear and for $i=r-1,\ldots,1$ we calculate that
\begin{align*}
   & \tfrac12 \ddt \tfrac{\|e_i(t)\|^2}{\psi(t)^2} = -\tfrac{\dot \psi(t)}{\psi(t)} \tfrac{\|e_i(t)\|^2}{\psi(t)^2}   + \tfrac{1}{\psi(t)^2} e_i(t)^\top \big(e_{i+1}(t) - k_i e_i(t)\big)\\
    &\le (\alpha - k_i) \tfrac{\|e_i(t)\|^2}{\psi(t)^2} + \tfrac{\|e_{i+1}(t)\|}{\psi(t)} \tfrac{\|e_i(t)\|}{\psi(t)}\\
    &= \left((\alpha - k_i) \tfrac{\|e_i(t)\|}{\psi(t)} + c_{i+1}\right)  \tfrac{\|e_i(t)\|}{\psi(t)}
\end{align*}
for all $t\in[0,\omega)$. %Seeking a contradiction, assume that there exists $t_1\in[0,\omega)$ with $\|e_i(t_1)\|/\psi(t_1)> c_i$. Set $t_0:=\sup\setdef{t\in[0,t_1)}{\|e_i(t)\|/\psi(t) = c_i}$, which is well defined by $\frac{\|e_i(0)\|}{\psi(0)} \le c_i$. Then the above estimate implies $\tfrac12 \ddt \tfrac{\|e_i(t)\|^2}{\psi(t)^2}\le 0$ for all $t\in[t_0,t_1]$, whence
%\[
%    c_i = \|e_i(t_0)\|/\psi(t_0) \ge \|e_i(t_1)\|/\psi(t_1) > c_i,
%\]
%a contradiction. 
Then it follows from the comparison principle  that $\|e_i(t)\|\le c_i \psi(t)$ for all $t\in[0,\omega)$.
The assertion then follows from an additional straightforward induction on $i=r-1,\ldots,1$.\hfill $\Box$
\end{pf}

%%%%%%%%%%%%%%%%%%%%%%%%%%%%%%%%%%%%%%%%%%%%%%%%%%%%%%%%%%%%%%%%%%%%%%%%%%%%%%%%%%%%%%%%%%%%%%%%%%%%%%%%%%%%%
\section{Funnel control -- main result}\label{Sec:Main}
%%%%%%%%%%%%%%%%%%%%%%%%%%%%%%%%%%%%%%%%%%%%%%%%%%%%%%%%%%%%%%%%%%%%%%%%%%%%%%%%%%%%%%%%%%%%%%%%%%%%%%%%%%%%%
%

In this section we show that the application of the funnel controller~\eqref{eq:ICFC} to a system~\eqref{eq:Sys} under input constraints~\eqref{eq:IC} leads to a closed-loop initial-value problem which has a global solution.  By a solution of~\eqref{eq:Sys},~\eqref{eq:IC},~\eqref{eq:ICFC} on $[-h,\omega)$ we mean a pair of functions $(y,\psi)\in C^{r-1}([-h,\omega),\R^m)\times C([-h,\omega),\R)$ with $\omega\in (0,\infty]$, which satisfies $y|_{[-h,0]} = y^0$, $\psi(0)=\psi^0$  and $(y^{(r-1)},\psi)|_{[0,\omega)}$ is locally absolutely continuous and
satisfies the differential equations in~\eqref{eq:Sys} and~\eqref{eq:ICFC} with $u$ defined by~\eqref{eq:IC},~\eqref{eq:ICFC} for almost all $t\in[0,\omega)$;
$(y,\psi)$ is called maximal, if it has no right extension that is also a solution.\\
Next we present the main result of the present paper.

\begin{Thm}\label{Thm:FunCon}
Consider a system~\eqref{eq:Sys} with $(d,f,T)\in\cN^{m,r}$ for $m,r\in\N$, under input constraints~\eqref{eq:IC} with saturation function $\sat$ that satisfies~(P5). Let $y^0\in C^{r-1}([-h,0],\R^m)$ be the initial trajectory, $y_{\rm ref}\in W^{r,\infty}(\R_{\ge 0},\R^m)$ the reference signal and choose funnel control design parameters as in~\eqref{eq:FC-param}. Set $e=y-y_{\rm ref}$ and assume that the instantaneous values $e(t), \dot e(t),\ldots, e^{(r-1)}(t)$ are available for feedback and satisfy $\|e_r(0)\|<\psi^0$ for $e_r$ defined in~\eqref{eq:ICFC}. Then the funnel controller~\eqref{eq:ICFC} applied to~\eqref{eq:Sys},~\eqref{eq:IC} yields an initial-value problem which has
a solution, every solution can be maximally extended and every maximal solution $(y,\psi):[-h,\omega)\to\R^{m+1}$, $\omega\in(0,\infty]$, has the following properties:
\begin{enumerate}
  \item global existence: $\omega = \infty$;
  \item the functions $e_1,\ldots,e_{r-1}$ satisfy~\eqref{eq:est-ki} and $e_r$ satisfies 
  \[
    \exists\, \eps\in(0,1)\ \forall\, t\ge 0:\     \|e_r(t)\| \le  \eps \psi(t),
  \]
  in particular, $k$ and $v$ are bounded;
  \item if the saturation is not active on some interval $[t_0,t_1)\subseteq\R_{\ge 0}$ with $t_1\in (t_0,\infty]$, i.e., $v(t) = \sat(v(t))$ for all $t\in [t_0,t_1)$, then the performance funnel~$\psi$ reverts to its prescribed shape exponentially fast:
      \[
        \forall\, t\in [t_0,t_1):\ \psi(t) \le \tfrac{\beta}{\alpha} + \mu(t_0) e^{-\alpha (t-t_0)},
      \]
      where $\mu(t_0) := \psi(t_0) -  \tfrac{\beta}{\alpha}$.
\end{enumerate}
\end{Thm}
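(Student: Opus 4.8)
The plan is to follow the standard scheme for feasibility proofs in funnel control: recast the closed loop as an initial-value problem for a functional differential equation on an open set, invoke a general existence theorem to obtain a maximal solution together with information about its boundary behaviour, derive a~priori bounds on the maximal interval of existence, and finally combine these with the boundary behaviour to rule out finite escape time and to read off~(i)--(iii). Concretely, I would take as state $x=\big(y,\dot y,\dots,y^{(r-1)},\psi\big)$, substitute $u=\sat\big(N(k)e_r\big)$, $k=(1-\|e_r\|^2/\psi^2)^{-1}$ and the auxiliary signals from~\eqref{eq:ICFC}, and write the closed loop as $\dot x=F(t,x_t)$ on the open set $\cD=\setdef{(\eta_1,\dots,\eta_r,\pi)\in\R^{rm}\times\R}{\pi>0,\ \|\eta_r^{\rm eff}\|<\pi}$, where $\eta_r^{\rm eff}$ is the value that $e_r$ takes for a given state (an affine function of the $\eta_i$ and of $y_{\rm ref},\dots,y_{\rm ref}^{(r-1)}$). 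One checks that $F$ is measurable in $t$ and continuous and locally essentially bounded on $\cD$; the only subtle point is the term $\psi\,\kappa(v)/\|e_r\|$ near $e_r=0$, which by~(P5) extends continuously by $0$ there (for $\|e_r\|$ small, $k$ is close to $1$, hence $\|v\|=|N(k)|\,\|e_r\|\le\theta$ and thus $\kappa(v)=0$). Since $\|e_r(0)\|<\psi^0$, the initial value lies in $\cD$, and the existence theory used in~\cite{Berg24} (cf.~\cite{BergIlch21}) yields a maximal solution on $[-h,\omega)$ such that, if $\omega<\infty$, the graph of $t\mapsto\big(y(t),\dots,y^{(r-1)}(t),\psi(t)\big)$ on $[0,\omega)$ leaves every compact subset of $[0,\omega)\times\cD$.

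On $[0,\omega)$ one has $\|e_r(t)\|<\psi(t)$ by construction, and since $\kappa\ge0$ the $\dot\psi$-equation gives $\dot\psi\ge-\alpha\psi+\beta$, hence $\psi(t)\ge\tfrac\beta\alpha+(\psi^0-\tfrac\beta\alpha)e^{-\alpha t}>\tfrac\beta\alpha$; in particular $\dot\psi\ge-\alpha\psi$ with $0\le\alpha<\min_i k_i$, so Lemma~\ref{lem:epsi} applies and yields~\eqref{eq:est-ki}, from which $\|e_i(t)\|\le c\,\psi(t)$ for $i=1,\dots,r-1$ and therefore $\|y^{(i)}(t)\|\le c\,\psi(t)+c'$ on $[0,\omega)$, for constants depending only on the $k_j$, $\alpha$ and $y_{\rm ref}$ (the history on $[-h,0]$ being fixed). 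Property~(iii) is then immediate: on an interval of inactive saturation $\kappa(v(\cdot))\equiv0$, so there $\dot\psi=-\alpha\psi+\beta$, whence $\psi(t)=\tfrac\beta\alpha+\mu(t_0)e^{-\alpha(t-t_0)}$; and the claim about $e_1,\dots,e_{r-1}$ in~(ii) is exactly~\eqref{eq:est-ki}.

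The core of the proof is to show that $\varphi:=\|e_r\|^2/\psi^2$ stays bounded away from $1$ on $[0,\omega)$, equivalently that $k=(1-\varphi)^{-1}$ is bounded there. Differentiating the chain in~\eqref{eq:ICFC} gives $\dot e_r=f\big(d,T(y,\dots,y^{(r-1)}),\sat(v)\big)-y_{\rm ref}^{(r)}+q$ with $q$ a fixed linear combination of $e_1,\dots,e_r$; since $\|\sat(v(t))\|\le\|\sat\|_\infty=:U$ and $d$ is bounded, property~(P4) with the (new) assumption that the $M_i$ are bounded in $t$, together with the bounds from Lemma~\ref{lem:epsi}, yields $\|\dot e_r(t)\|\le A+B\,\Psi(t)$, where $\Psi(t):=\max_{s\in[0,t]}\psi(s)$. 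Using $\kappa(v)=\|v-\sat(v)\|\ge\|v\|-U=|N(k)|\,\|e_r\|-U$ and $\tfrac12\ddt\varphi=\psi^{-2}e_r^\top\dot e_r-\tfrac{\dot\psi}{\psi}\varphi$ in combination with the $\dot\psi$-equation, one obtains
\[
\tfrac12\ddt\varphi\ \le\ \sqrt{\varphi}\,\frac{\|\dot e_r\|+U}{\psi}\ +\ \big(\alpha-|N(k)|\big)\varphi\ -\ \frac{\beta\,\varphi}{\psi}.
\]
Moreover $\ddt\ln\psi\le|N(k)|+U/\delta_0$, where $\delta_0>0$ is a threshold (depending only on $\alpha,\beta,\theta,N$) below which $\|e_r\|<\delta_0$ forces $\kappa(v)=0$. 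With $\psi>\tfrac\beta\alpha$ and the substitution $\tfrac12\ddt\varphi=\dot k/(2k^2)$, I would then run a Nussbaum-type argument: assuming $k$ unbounded on $[0,\omega)$, one selects times at which $k$ attains new maxima and uses the surjectivity of $N$ — which makes $|N(k)|$ large along the relevant range of $k$-values — so that the funnel-widening term $-|N(k)|\varphi$ dominates the (at most $\Psi$-proportional) driving term and pushes $\varphi$ back down, contradicting $k\to\infty$. This is the main obstacle of the whole proof: because $N$ is only a surjection and may have arbitrarily large zeros, a naive barrier argument at $\varphi=1$ fails, and one must instead control the time-averaged effect of the saturation-driven widening, along the lines of the proofs in~\cite{Berg24,BergIlch21}.

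Once $\varphi\le\eps^2$ for some $\eps\in(0,1)$ on $[0,\omega)$ is established, $k\le(1-\eps^2)^{-1}$ and $\|v\|=|N(k)|\,\|e_r\|\le N_{\max}\psi$ are bounded relative to $\psi$ (which yields the last assertion of~(ii)); the $\dot\psi$-equation then reduces to the linear differential inequality $\dot\psi\le(N_{\max}+U/\delta_0-\alpha)\psi+\beta$, so $\psi$ is bounded on $[0,\omega)$ whenever $\omega<\infty$, and with it, via Lemma~\ref{lem:epsi}, $y,\dot y,\dots,y^{(r-1)}$. If $\omega<\infty$, this confines the graph of the solution to a compact subset of $[0,\omega)\times\cD$, contradicting the boundary behaviour; hence $\omega=\infty$, which is~(i), and~(ii) has been proved along the way.
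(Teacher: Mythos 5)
Your overall architecture coincides with the paper's: recast the closed loop as an initial-value problem on a relatively open set $\cD$ and invoke the existence theory of \cite{IlchRyan09}; use $\dot\psi\ge-\alpha\psi+\beta$ and Lemma~\ref{lem:epsi} to get $\|e_i(t)\|,\|y^{(i)}(t)\|\lesssim\psi(t)$; use (P4) with the boundedness of the $M_i$ in $t$ and of $d,\sat(v)$ to get $\|\dot e_r(t)\|\lesssim\psi(t)$; prove $\|e_r\|\le\eps\psi$ for some $\eps<1$; and close with the compact-exit argument. Items (ii) and (iii) and your final paragraph (boundedness of $\kappa(v)/\|e_r\|$ via the threshold $\delta_0$ from (P5), exponential bound on $\psi$, compactness contradiction) are exactly as in Steps~2, 3, 6 and 7 of the paper.

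The gap is in the step you yourself identify as the core one. Your differential inequality for $\varphi=\|e_r\|^2/\psi^2$ is essentially the right one, but the argument you propose to extract boundedness of $k$ from it --- ``select times at which $k$ attains new maxima and use the surjectivity of $N$, which makes $|N(k)|$ large along the relevant range of $k$-values'' --- does not work as stated: surjectivity only guarantees that $|N|$ is unbounded on every interval $[s_0,\infty)$, not that $|N(k)|$ is large at the particular values where $k$ peaks (for $N(s)=s\sin s$ the gain may peak exactly at a zero of $N$). A genuine time-averaged Nussbaum argument is also not what is needed here, and you leave it as an appeal to the literature. The paper's resolution is more elementary and is a barrier argument at a \emph{well-chosen level}: fix $\delta>\alpha+\hat C+\tfrac{\alpha}{\beta}M$ and pick $\eps\in(\|e_r(0)\|/\psi^0,1)$ with $\eps\,|N(\tfrac{1}{1-\eps^2})|\ge 2\delta$ (possible since $N$ is a surjection and $\eps\mapsto\tfrac{1}{1-\eps^2}$ sweeps out a half-line). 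At the last crossing time $t_0$ of the level $\|e_r\|=\eps\psi$ one has $k(t_0)=\tfrac{1}{1-\eps^2}$ exactly, so $|N(k(\cdot))|\ge\delta/\eps$ persists on a right neighbourhood by continuity; there, $\kappa(v)\ge|N(k)|\,\|e_r\|-M$ plugged into the $\dot\psi$-equation yields $\tfrac{\rm d}{{\rm d}t}\big(\|e_r\|-\eps\psi\big)\le0$, contradicting a subsequent excursion above the level. Note that no sign/averaging issue arises because the widening term involves only $|N(k)|$, so the usual unknown-control-direction difficulty is absent; the saturation-driven widening, not the control action, does the work. Without this (or an equivalent) construction of $\eps$, your Step establishing $\varphi\le\eps^2$ --- on which (i), the second half of (ii), and the boundedness of $v$ all rest --- is not proved.

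A smaller point: when you bound $\|\dot e_r(t)\|\le A+B\,\Psi(t)$ with $\Psi(t)=\max_{s\in[0,t]}\psi(s)$ rather than $A+B\,\psi(t)$, the comparison with $\eps\dot\psi(t)$ in the barrier step no longer goes through directly, since the dominating negative term $-|N(k)|\psi(t)$ is proportional to $\psi(t)$, not $\Psi(t)$; you need the pointwise bound in terms of $\psi(t)$ as in Steps~3--4 of the paper.
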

\begin{pf}
The proof consists of several steps.\\
%\begin{figure*}[bt]
%\begin{equation}\label{eq:setD}
%\cD:=\setdef{(t,\xi_1,\ldots,\xi_r,\zeta)\in\R_{\ge 0}\times\left(\R^{m}\right)^r\times \R }{ \left\|\sum\nolimits_{i=1}^{r} %\mu_i \xi_i - p(\ddt)y_{\rm ref}(t)\right\|<\zeta,\
% \zeta > \frac{\beta}{\alpha}},
%\end{equation}
%\end{figure*}
\emph{Step 1}: We recast the closed-loop system in the form of an initial-value problem to which a well-known existence theory applies. First define the polynomial $p(s) = (s+k_{r-1})\cdots(s+k_1)$ and observe that $p(s) = s^{r-1} + \sum_{i=1}^{r-1} \mu_i s^{i-1}$ for some $\mu_1,\ldots,\mu_{r-1}>0$; set $\mu_r:=1$. Define the non-empty and relatively open set
\begin{align*}
&& \cD :=\left.\bigg\{(t,\xi_1,\ldots,\xi_r,\zeta)\in\R_{\ge 0}\times\left(\R^{m}\right)^r\times \R\  \right|\qquad \\
&& \left\|\sum\nolimits_{i=1}^{r} \mu_i \xi_i - p(\ddt)y_{\rm ref}(t)\right\|< \zeta,\ \zeta > \tfrac{\beta}{\alpha}\bigg\},
\end{align*}
and the functions
\begin{align*}
    &E:\cD\to\R^m,\ (t,\xi_1,\ldots,\xi_r,\zeta)\mapsto \sum_{i=1}^{r} \mu_i \xi_i - p(\ddt)y_{\rm ref}(t),\\
    &V:\cD\to\R^m,\ (t,\xi_1,\ldots,\xi_r,\zeta)\\
    &\quad \mapsto N\left(\tfrac{1}{1-\tfrac{\|E(t,\xi_1,\ldots,\xi_r,\zeta)\|^2}{\zeta^2}}\right)E(t,\xi_1,\ldots,\xi_r,\zeta).
\end{align*}
Further define
\begin{align*}
  &F\colon \cD\times\R^q\to\R^{rm+1},\ (t,\xi_1,\ldots,\xi_r,\zeta,\eta)\\
  &\quad \mapsto \begin{pmatrix} \xi_2\\\vdots\\\xi_r\\f\big(d(t),\eta,\sat(V(t,\xi_1,\ldots,\xi_r,\zeta))\big)\\
   -\alpha \zeta +\beta + \zeta \frac{\|V(t,\xi_1,\ldots,\xi_r,\zeta)-\sat(V(t,\xi_1,\ldots,\xi_r,\zeta))\|}{\|E(t,\xi_1,\ldots,\xi_r,\zeta)\|}
   \end{pmatrix}.
\end{align*}
Note that the function~$F$, and in particular its last component, is well-defined on $\cD\times\R^q$: Since $N$ is continuous and $\zeta>\tfrac{\beta}{\alpha}$, there exists $\delta>0$ such that for all $(t,\xi_1,\ldots,\xi_r,\zeta)\in\cD$ with $\|E(t,\xi_1,\ldots,\xi_r,\zeta)\|< \delta$ we have that $\|V(t,\xi_1,\ldots,\xi_r,\zeta)\| < \theta$ for $\theta$ as in~(P5), and hence $\|V(t,\xi_1,\ldots,\xi_r,\zeta)-\sat(V(t,\xi_1,\ldots,\xi_r,\zeta))\| = 0$.\\
Writing
\[
x(t)=\big(y(t)^\top,\ldots,y^{(r-1)}(t)^\top, \psi(t)\big)
\]
we see that the closed-loop initial-value problem~\eqref{eq:Sys},~\eqref{eq:IC},~\eqref{eq:ICFC}
may now be formulated as
\begin{equation}\label{eq:IVP-CL}
\begin{aligned}
\dot x(t)&=F\big(t,x(t),T(x)(t)\big),\\
x|_{[-h,0]}&=x^0\in C([-h,0],\R^{rm+1}),
\end{aligned}
\end{equation}
where, for $t\in [-h,0]$,
\[
x^0(t):=\big( y^0(t)^\top,\ldots,(y^0)^{(r-1)}(t)^\top, \psi^0\big)^\top.
\]
The function $F$ is measurable in~$t$, continuous in~$(\xi_1,\ldots,\xi_r,\zeta,\eta)$ and locally essentially bounded. By the assumptions $\|e_r(0)\|<\psi^0$ and $\psi^0>\tfrac{\beta}{\alpha}$ we see that $(0,x^0(0))\in\cD$. Therefore, an application of a variant of~\cite[Thm.~B.1]{IlchRyan09}\footnote{Although the property~(P3) of the operator~$T$ is weaker than required in~\cite{IlchRyan09}, this ``local'' property suffices for the proof.} yields the existence of a solution of~\eqref{eq:IVP-CL} and every solution can be extended to a maximal solution. Furthermore, any maximal solution $x:[-h,\omega)\to\R^{n}$, $\omega\in(0,\infty]$, of~\eqref{eq:IVP-CL} has the property that its graph
\[
    \cG := \setdef{(t,x(t))}{t\in[0,\omega)} \subset \cD
\]
has a closure which is not a compact subset of~$\cD$.\\
\emph{Step 2}: In this step we record some observations for later use. For $t\in[0,\omega)$, define $e_1(t) := e(t) = y(t) - y_{\rm ref}(t)$ for $y(t) = (x_1(t),\ldots,x_m(t))^\top$, and $e_{i+1}(t) := \dot e_i(t) + k_i e_i(t)$ for $i=1,\ldots,r-1$ as well as $\psi(t):= x_{rm+1}(t)$. Then $e_r(t) = E(t,x(t))$  and we have $\|e_r(t)\|<\psi(t)$, by which $k(t):= \left(1- \frac{\|e_r(t)\|^2}{\psi(t)^2}\right)^{-1}$ is well defined. Thus we arrive at the quantities in the control law~\eqref{eq:ICFC}; in particular $V(t,x(t)) = N(k(t)) e_r(t) = v(t)$. Furthermore, invoking $p(s)$ from Step~1, it follows that for almost all $t\in[0,\omega)$ we have
\begin{equation}\label{eq:ODE-ei}
  \dot e_r(t) = e^{(r)}(t) + \sum_{i=1}^{r-1} \mu_i e^{(i)}(t).
\end{equation}
Furthermore, since $\psi(t)>0$ it follows $\dot \psi(t) \ge -\alpha \psi(t) + \beta$ and hence $\psi(t) \ge \mu(0) e^{-\alpha t} + \tfrac{\beta}{\alpha}$ for all $t\in [0,\omega)$, where $\mu(\cdot)$ is defined in statement~(iii).\\
\emph{Step 3}: Fix $i\in\{0,\ldots,r-1\}$. We show that there exists $\kappa_i>0$ such that $\|y^{(i)}(t)\|\le \kappa_i \psi(t)$ for all $t\in [0,\omega)$. Observe that a straightforward induction utilizing $e_{i+1}(t) = \dot e_i(t) + k_i e_i(t)$ and $e_1(t) = e(t)$  gives that
\[
    e^{(i)}(t) = e_{i+1}(t) - \sum_{j=1}^i k_j e_j^{(i-j)}(t) = e_{i+1}(t) + \sum_{j=1}^i c_{i,j} e_j(t)
\]
for some $c_{i,j}\in\R$, $i=1,\ldots,r-1$, $j=1,\ldots,i$. Invoking $\|e_r(t)\|<\psi(t)$ and Lemma~\ref{lem:epsi} (notice that~$\psi$ satisfies the assumptions of the lemma by the observations in Step~2), it follows that~\eqref{eq:est-ki} holds, i.e., there exist $\sigma_1,\ldots,\sigma_{r-1}$ such that, with $\sigma_r:=1$, we have $\|e_i(t)\|\le \sigma_i \psi(t)$ for all $t\in[0,\omega)$ and $i=1,\ldots,r$. Then 
\begin{align*}
    \|y^{(i)}(t)\|&\le \|e_{i+1}(t)\| + \sum_{j=1}^i |c_{i,j}| \|e_j(t)\| + \|y_{\rm ref}^{(i)}(t)\| \\
    &\le \bigg( \sigma_{i+1} +  \sum_{j=1}^i |c_{i,j}| \sigma_j + \frac{\alpha}{\beta} \|y_{\rm ref}^{(i)}\|_\infty\bigg) \psi(t)
\end{align*}
for all $t\in[0,\omega)$, where we used $1\le \tfrac{\alpha}{\beta} \psi(t)$ by Step~2. This proves the assertion for $\kappa_i:= \left( \sigma_{i+1} +  \sum_{j=1}^i |c_{i,j}| \sigma_j + \frac{\alpha}{\beta} \|y_{\rm ref}^{(i)}\|_\infty\right)$.\\
\emph{Step 4}: We show that there exists $C>0$ such that for all $t\in [0,\omega)$ we have
\[
    \|f\big(d(t),T(y,\dot y, \ldots,y^{(r-1)})(t),\sat(v(t))\big)\| \le C \psi(t).
\]
By the sector bound property~(P4) and Step~3 we have that
\begin{align*}
   & \|f\big(d(t),T(y,\dot y, \ldots,y^{(r-1)})(t),\sat(v(t))\big)\| \\
    & \le M_{1}\big(t,d(t),\sat(v(t))\big) \\
    &\quad + \sum_{i=1}^{r} M_{i+1}\big(t,d(t),\sat(v(t))\big) \|y^{(i-1)}|_{[-h,t]}\|_\infty\\
    & \le M_{1}\big(t,d(t),\sat(v(t))\big) \frac{\alpha}{\beta} \psi(t) \\
    &\quad + \sum_{i=1}^{r} M_{i+1}\big(t,d(t),\sat(v(t))\big) \big(\|(y^0)^{(i-1)}\|_\infty \!+\! \|y^{(i-1)}\|_\infty\big).
\end{align*}
for all $t\in [0,\omega)$. Since $M_1,\ldots,M_{r+1}$ are continuous and bounded in~$t$ and $d$, $\sat(v)$ are bounded, there exist $\overline M_1,\ldots, \overline M_{r+1}>0$ such that $\|M_{i}\big(t,d(t),\sat(v(t))\big)\|\le \overline M_i$ for all $t\in[0,\omega)$ and all $i=1,\ldots,r+1$. Then, by Step~3, the assertion holds for 
\begin{align*}
    C&:=  \tfrac{\alpha}{\beta} \overline M_{1}  + \sum_{i=1}^{r} \overline M_{i+1}\left(\tfrac{\alpha}{\beta} \|(y^0)^{(i-1)}\|_\infty + \kappa_{i-1}\right).
\end{align*}
\emph{Step 5}: We show that $k\in L^\infty([0,\omega),\R)$. Note that, invoking~\eqref{eq:ICFC} we may estimate
\begin{equation}\label{eq:est-kappa}
   \forall\, t\in [0,\omega):\ \kappa(v(t)) \ge |N(k(t))| \cdot \|e_r(t)\| - M,
\end{equation}
where $M>0$ is some upper bound of $\sat$, i.e., $\|\sat(v)\|\le M$ for all $v\in\R^m$. Now set 
\[
    \hat C:= C + \sum_{i=1}^r \mu_i \kappa_i + \frac{\alpha}{\beta} \bigg(\|y_{\rm ref}^{(r)}\|_\infty + \sum_{i=1}^{r-1} \mu_i \|y_{\rm ref}^{(i)}\|_\infty\bigg)
\]
and choose
\begin{equation}\label{eq:delta}
    \delta > \alpha + \hat C + \frac{\alpha}{\beta} M
\end{equation}
and $\eps\in (0,1)$ so that, invoking $\|e_r(0)\|<\psi^0$,
\begin{align*}
    \eps > \frac{\|e_r(0)\|}{\psi^0}\quad\text{and}\quad \eps \left| N\left(\frac{1}{1-\eps^2}\right)\right| \ge 2\delta,
\end{align*}
where the latter is possible because of the properties of~$N$ in~\eqref{eq:FC-param}. We show that $\|e_r(t)\| \le \eps \psi(t)$ for all $t\in [0,\omega)$, which is equivalent to $k\in L^\infty([0,\omega),\R)$. Seeking a contradiction, assume there exists $t_1\in [0,\omega)$ such that $\|e_r(t_1)\| > \eps \psi(t_1)$ and define
\[
    t_0 := \max\setdef{t\in[0,t_1)}{\|e_r(t)\|= \eps \psi(t)}.
\]
Then, for all $t\in[t_0,t_1]$, we have
\begin{align}\label{eq:est-er-kr}
  \|e_r(t)\| \ge \eps \psi(t) %\stackrel{\rm Step~3}{\ge} \frac{\beta_r}{\alpha_r}
  \quad \text{and}\quad
  k(t)  \ge  \frac{1}{1-\eps^2}.
\end{align}
Since $|N(k(t_0))| = |N(\tfrac{1}{1-\eps^2})| \ge 2\delta/\eps$, there exists $t_2\in (t_0,t_1]$ such that
\[
   \forall\, t\in [t_0,t_2]:\ |N(k(t))|\ge \frac{\delta}{\eps}.
\]
Furthermore, by definition of~$t_0$ we have that $\|e_r(t_2)\| > \eps \psi(t_2)$. Then we obtain that
\begin{align*}
  &\tfrac12 \ddt \|e_r(t)\|^2 \stackrel{\eqref{eq:ODE-ei}}{=} e_r(t)^\top \bigg( e^{(r)}(t) + \sum_{i=1}^{r-1} \mu_i e^{(i)}(t)\bigg)\\
  &\quad\stackrel{\eqref{eq:Sys}}{\le}  \bigg( \|f\big(d(t),T(y,\dot y, \ldots,y^{(r-1)})(t),\sat(v(t))\big)\| \\
  &\qquad + \|y_{\rm ref}^{(r)}(t)\| + \sum_{i=1}^{r-1} \mu_i \|e^{(i)}(t)\|\bigg) \|e_r(t)\|\\
  &\ \stackrel{\rm Steps\,3,4}{\le}  \bigg( C \psi(t) + \|y_{\rm ref}^{(r)}\|_\infty \\
  &\quad + \sum_{i=1}^{r-1} \mu_i \big(\kappa_i \psi(t) + \|y_{\rm ref}^{(i)}\|_\infty\big)\bigg) \|e_r(t)\|\\
   &\ \stackrel{\rm Step\,2}{\le} \big(\eps \dot \psi(t) + \hat C \psi(t) - \eps \dot \psi(t) \big) \|e_r(t)\|\\  
    &\quad\stackrel{\eqref{eq:ICFC}}{=} \Big(\eps \dot \psi(t) \!+\! \hat C \psi(t) \!+\! \eps \alpha \psi(t) \!-\! \eps \beta \!-\! \eps \psi(t) \tfrac{\kappa(v(t))}{\|e_r(t)\|}\Big) \|e_r(t)\|\\
    &\stackrel{\eqref{eq:est-kappa},\eqref{eq:est-er-kr}}{\le} \Big(\eps \dot \psi(t) \!-\! \eps \beta \!+\! M \!-\! \big( \eps |N(k(t))| \!-\! \eps \alpha  \!-\!\hat C\big) \psi(t) \Big) \|e_r(t)\|\\
%\end{align*}
%\begin{align*}
  &\quad \le \Big(\eps \dot \psi(t) + M - \underset{>0\ \text{by~\eqref{eq:delta}}}{\underbrace{\big( \delta - \alpha  -\hat C\big)}} \psi(t) \Big) \|e_r(t)\|\\
  &\ \stackrel{\rm Step\,2}{\le} \Big(\eps \dot \psi(t) + M  - \big( \delta - \alpha  -\hat C\big) \tfrac{\beta}{\alpha} \Big) \|e_r(t)\|\\
  &\quad \stackrel{\eqref{eq:delta}}{\le} \eps \dot \psi(t) \|e_r(t)\|
\end{align*}
for almost all $t\in[t_0,t_2]$ and upon integration we get
\begin{align*}
  \|e_r(t_2)\| - \|e_r(t_0)\| &= \int_{t_0}^{t_2}  \tfrac12 \|e_r(t)\|^{-1} \ddt \|e_r(t)\|^2 {\rm d}t\\
  &\le \int_{t_0}^{t_2}  \eps \dot \psi(t) {\rm d}t = \eps \psi(t_2) - \eps \psi(t_0),
\end{align*}
which yields the contradiction
\[
    0 = \eps \psi(t_0) - \|e_r(t_0)\|  \le \eps \psi(t_2) - \|e_r(t_2)\| < 0.
\]
\emph{Step 6}: We show that $\omega=\infty$, i.e., assertion~(i) of the theorem. Suppose that $\omega<\infty$. From Step~5 it follows that there exists $\eps\in (0,1)$ such that
\[
    \forall\, t\in[0,\omega):\ \|e_r(t)\|\le \eps \psi(t).
\]
Furthermore, by Step~2 we have $\psi(t)\ge \mu(0) e^{-\alpha \omega} + \tfrac{\beta}{\alpha} > \tfrac{\beta}{\alpha}$ for all $t\in[0,\omega)$. Moreover, from boundedness of $k$ it follows that~$\kappa(v)$ is bounded and hence $\tfrac{\kappa(v)}{\|e_r\|}$ is bounded, since $\kappa(v)$ vanishes when $\|e_r\|$ is small enough, cf.\ Step~1. Therefore, it follows from~\eqref{eq:ICFC} that there exist some $d_{1}, d_{2}\ge 0$ such that $\psi(t) \le d_{1} e^{d_{2} t} \le d_{1} e^{d_{2} \omega}$ for all $t\in [0,\omega)$. Define
\begin{align*}
&& \hat \cD :=\left.\bigg\{(t,\xi_1,\ldots,\xi_r,\zeta)\in[0,\omega]\times\left(\R^{m}\right)^r\times \R\  \right|\quad \\
&& \left\|\sum\nolimits_{i=1}^{r} \mu_i \xi_i - p(\ddt)y_{\rm ref}(t)\right\|\le \eps \zeta,\quad\\
&& \tfrac{\beta}{\alpha} + \mu(0) e^{-\alpha \omega} \le \zeta \le d_{1} e^{d_{2} \omega}\bigg\},
\end{align*}
which is evidently a compact subset of $\cD$ since $y_{\rm ref},\ldots,y_{\rm ref}^{(r-1)}$ are bounded. Since $(t,x(t))\in \hat \cD$ for all $t\in[0,\omega)$, it follows that the closure of the set~$\cG$ from Step~1 is a compact subset of~$\cD$, a contradiction. Therefore, $\omega=\infty$.\\
\emph{Step 7}: We complete the proof by establishing assertions~(ii) and~(iii) of the theorem. Assertion~(ii) is a consequence of Steps~3 and~5. Let  $[t_0,t_1)\subseteq\R_{\ge 0}$ with $t_1\in (t_0,\infty]$ be an interval with $v(t) = \sat(v(t))$ for all $t\in [t_0,t_1)$, then  statement~(iii) is clear since $\dot \psi(t) = -\alpha \psi(t) + \beta$ for all $t\in[t_0,t_1)$. This completes the proof.\hfill $\Box$
\end{pf}

We stress that for the global solution provided by Theorem~\ref{Thm:FunCon} the funnel function $\psi$ is not bounded in general. However, \textit{a posteriori}, by~(iii) the funnel boundary reverts to its prescribed shape on any interval where the saturation is not active; in particular, if $t_1=\infty$, then it is bounded.

%Furthermore, for the system class $\cN^{m,r}_{\rm BIR}$ considered in~\cite{Berg24}, it is possible to show global boundedness of $\psi$ for a sufficiently large saturation level, i.e., $\sat(v)=v$ for all $v\in\R^m$ with $\|v\|\le M$ and $M>0$ sufficiently large. To this end, the results of Theorem~3.2 and Corollary~3.3 from~\cite{Berg24} can be adapted to the present context of the improved controller~\eqref{eq:ICFC}~-- we leave the proof to the readers.

\begin{Rem}
    We comment on the freedom of choice of the design parameters in~\eqref{eq:FC-param}. The parameters $\alpha, \beta$ and $\psi^0$ are chosen by the user to determine the desired shape of the funnel boundary in the form $\psi_{\rm des}(t) = \left(\psi^0-\tfrac{\beta}{\alpha}\right) e^{-\alpha t} + \tfrac{\beta}{\alpha}$. Then, according to~\eqref{eq:est-ki}, a suitable choice for $k_1,\ldots,k_{r-1}$ could be $k_i=\alpha+1$, resulting in the estimate $\|e_i(t)\|< \max_{j=i,\ldots,r-1} \left\{1, \tfrac{\|e_j(0)\|}{\psi^0}\right\}\psi(t)$ for all $t\ge 0$ and all $i=1,\ldots,r-1$. Finally, a typical choice for the surjection~$N$ is $N(s) = s\sin s$.\\
    Compared to the precursor of the improved controller~\eqref{eq:ICFC} presented in~\cite{Berg24}, it is another advantage that it has much fewer design parameters. For the precursor it is generally hard to determine suitable parameters.
\end{Rem}

%%%%%%%%%%%%%%%%%%%%%%%%%%%%%%%%%%%%%%%%%%%%%%%%%%%%%%%%%%%%%%%%%%%%%%%%%%%%%%%%%%%%%%%%%%%%%%%%%%%%%%%%%%%%%
\section{Simulations}\label{Sec:Sim}
%%%%%%%%%%%%%%%%%%%%%%%%%%%%%%%%%%%%%%%%%%%%%%%%%%%%%%%%%%%%%%%%%%%%%%%%%%%%%%%%%%%%%%%%%%%%%%%%%%%%%%%

We compare the controller~\eqref{eq:ICFC} to its precursor presented in~\cite{Berg24} and consider the benchmark example of the mass-on-car system presented therein, which is originally taken from~\cite{SeifBlaj13}. As shown in Fig.~\ref{Mass.on.car}, the mass~$m_2$ (in~\si{\kilo\gram})  moves on a ramp inclined by the angle~{$\vartheta \in [0,\frac{\pi}{2})$} (in \si{\radian}) and is mounted on a car with mass~$m_1$ (in \si{\kilo\gram}). The control input is the force~$u=F$ (in \si{\newton}) which acts on the car. The equations of motion for the system are given by
{\small
\begin{equation}\label{mass.on.car.equ}
\begin{bmatrix}
m_1+m_2&m_2\cos \vartheta\\
m_2\cos \vartheta&m_2
\end{bmatrix} \begin{pmatrix} \ddot{{ z}}(t)\\ \ddot{s}(t) \end{pmatrix} +\begin{pmatrix}
0\\
ks(t)\!+\!d\dot{s}(t)
\end{pmatrix}\!=\!\begin{pmatrix}
u(t)\\
0
\end{pmatrix},
\end{equation}
}
where $t$ is the current time (in \si{\second}),  $z$ (in \si{\metre}) is the horizontal car position and~$s$ (in \si{\metre}) the relative position of the mass on the ramp. The coefficients of the spring and damper are given by~$k  >0$ (in \si{\newton\per{\metre}}) and $d >0$ (in \si{\newton\second\per{\metre}}), resp. The output
 $y$ (in \si{\metre}) is the horizontal position of the mass on the ramp given by $y(t)={ z}(t)+s(t)\cos \vartheta$.
    \begin{figure}[htp]
    \begin{center}
    \includegraphics[trim=2cm 4cm 5cm 15cm,clip=true,width=5.2cm]{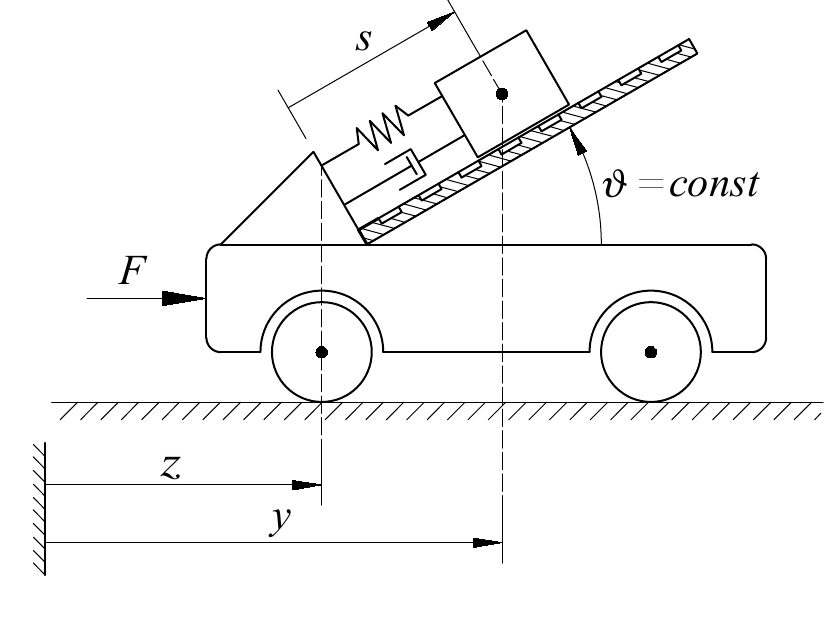}
    \end{center}
    \vspace*{-3mm}
    \caption{Mass-on-car system.}
    \label{Mass.on.car}
    \end{figure}
It can be observed that for $u=0$ the system admits the solution $t\mapsto z(t) := t$ and $t\mapsto s(t) := 0$, thus the system is not input-to-state stable and hence~\cite[Thm.~1]{TrakBech24} cannot be applied.\\
For the simulation we consider the case of order $r=3$, that is $\vartheta=0$. As shown in~\cite{Berg24}, system~\eqref{mass.on.car.equ} with output~$y$ belongs to the class $\cN^{1,3}$ in this case. We choose the parameters
$m_1=4$, $m_2=1$, $k=2$, $d=1$,
%$m_1=\SI{4}{\kilo\gram}, m_2=\SI{1}{\kilo\gram}, k= \SI{2}{\newton\per{\metre}}, d=\SI{1}{\newton\second\per{\metre}}$,
the initial values $z(0)= s(0) = 0$, $\dot{z}(0) = \dot s(0)= 0$ and the reference signal $y_{\rm ref} \colon t\mapsto \tfrac12 \cos t$. 
The saturation function in~\eqref{eq:IC} is chosen as $\sat(v)=v$ for $|v|\le M$ and $\sat(v) =\sgn(v) M$ for $|v|>M$, with $M=8$. All simulations are MATLAB generated (solver: {\tt ode45}, rel.\ tol.: $10^{-10}$, abs.\ tol.: $10^{-8}$) and over the time interval $[0,20]$.\\
For both controllers~\eqref{eq:ICFC} and its precursor from~\cite{Berg24}, we choose the common parameters $N(s)=s\sin s$ and $\alpha=\alpha_1=1.5$, $\beta=\beta_1=0.15$, $\psi^0=\psi_1^0=3.1$, so that the desired funnel boundary is $\psi_{\rm des}(t) = 3e^{-1.5 t} + 0.1$. Furthermore, for~\eqref{eq:ICFC} we choose $k_1=k_2=\alpha+1$, and for the controller from~\cite{Berg24} we choose
\begin{align*}
    &\alpha_2 = 0.9\alpha_1,\ \alpha_3 = 0.9\alpha_2,\ \beta_2 = 0.5\alpha_2,\ \beta_3=0.5\alpha_3,\\
    &p_1=p_2=1.1,\ \psi_2^0=2,\ \psi_3^0=1.
\end{align*}
\captionsetup[subfloat]{labelformat=empty}
\begin{figure}[h!tb]
  \centering
  \subfloat[Fig.~\ref{fig:sim}a: Performance funnels and tracking errors]
{
\centering
%\hspace{-2mm}
  \includegraphics[width=0.39\textwidth]{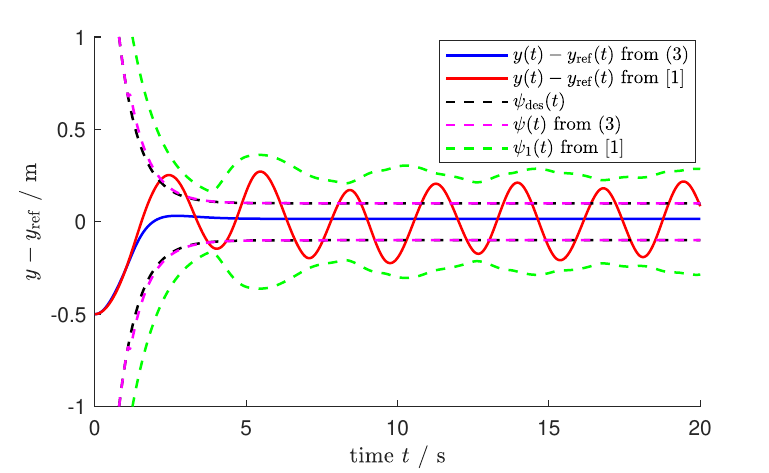}
\label{fig:sim-e}
}\\
%\vspace{5mm}
\subfloat[Fig.~\ref{fig:sim}b: Input functions]
{
\centering
%\hspace{-2mm}
\includegraphics[width=0.39\textwidth]{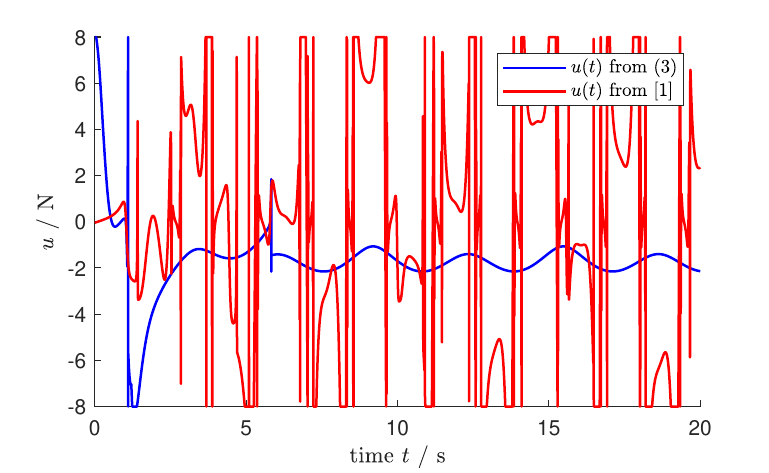}
\label{fig:sim-u}
}
\caption{Simulation of controller~\eqref{eq:ICFC} and its precursor from~\cite{Berg24}, labeled as [1].} %, of system~\eqref{mass.on.car.equ} with $\vartheta=0$.}
\label{fig:sim}
\end{figure}
The application of the controller~\eqref{eq:ICFC} and its precursor from~\cite{Berg24} to~\eqref{mass.on.car.equ} is depicted in Fig.~\ref{fig:sim}. The corresponding tracking errors and funnel boundaries are shown in
Fig.~\ref{fig:sim-e}, while Fig.~\ref{fig:sim-u} shows the respective input functions. It is evident that the performance of the improved controller~\eqref{eq:ICFC} is superior to its precursor. While the latter constantly induces periods of active saturation with a tracking error frequently leaving the desired performance funnel, the improvement~\eqref{eq:ICFC} only saturates over a short period at the beginning and quickly drives the tracking error to zero (never leaving the desired funnel), even under tight input constraints. 

\vspace*{-1mm}

%%%%%%%%%%%%%%%%%%%%%%%%%%%%%%%%%%%%%%%%%%%%%%%%%%%%%%%%%%%%%%%%%%%%%%%%%%%%%%%%%%%%%%%%%%%%%%%%%%%%%%%%%%%%%
%\section{Conclusion}\label{Sec:Concl}
%%%%%%%%%%%%%%%%%%%%%%%%%%%%%%%%%%%%%%%%%%%%%%%%%%%%%%%%%%%%%%%%%%%%%%%%%%%%%%%%%%%%%%%%%%%%%%%%%%%%%%%%%%%%%

%In the present paper we proposed an improvement of a recent funnel control design for a large class of nonlinear systems modeled by functional differential equations in the presence of input constraints presented in~\cite{Berg24}. Compared to its precursor, the new controller comprises only one (and not $r$) dynamic equations for the funnel functions, it involves much less design parameters, it enables a proof for boundedness of the control signal~$v$, and in simulations it exhibits a superior performance. Future research should focus on bridging the gap to the recent work of Trakas and Bechlioulis e.g.\ in~\cite{TrakBech24} on the same topic, to find an approach which unifies the different considered system classes.

%\section*{Acknowledgements}

%I thank T.~Reis (U Hamburg) for fruitful discussions.

%\begin{appendices}

%\end{appendices}

%\bibliographystyle{IEEEtran}
\bibliography{MST-TB,references}

%\begin{IEEEbiography}[{\includegraphics[width=1.05in,clip,keepaspectratio]{Thomas.jpg}}]{Thomas Berger} was born in Germany in 1986. He received his B.Sc. (2008), M.Sc. (2010), and Ph.D. (2013), all in Mathematics and from Technische Universit\"at Ilmenau, Germany. From 2013 to 2018 Dr. Berger was a postdoctoral researcher at the Department of Mathematics, Universit\"at Hamburg, Germany. Since January 2019 he is a Juniorprofessor at the Institute for Mathematics, Universit\"at Paderborn, Germany. His research interest encompasses adaptive control, optimization-based control, differential–algebraic systems and multibody dynamics.

%For his exceptional scientific achievements in the field of Applied Mathematics and Mechanics, Dr. Berger received the ``Richard-von-Mises Prize 2021'' of the International Association of Applied Mathematics and Mechanics (GAMM). He further received several awards for his dissertation, including the ``2015 European Ph.D. Award on Control for Complex and Heterogeneous Systems'' from the European Embedded Control Institute and the ``Dr.-Körper-Preis 2015'' from the GAMM. He serves as an Associate Editor for Mathematics of Control, Signals, and Systems, the IMA Journal of Mathematical Control and Information and the DAE Panel, and as a Review Editor for Frontiers in Control Engineering.
%\end{IEEEbiography}

\end{document}